\newtheorem{thm}{Theorem}[section]
\theoremstyle{definition}
\newtheorem{defn}[thm]{Definition}
\newtheorem{remark}[thm]{Remark}
\def\Hom{\mathrm{Hom}}
\def\Aut{\mathrm{Aut}}
\newcommand{\Syl}{\operatorname{Syl}\nolimits}
\def\@nameedef#1{\expandafter\edef\csname #1\endcsname}
\def\@nameedef#1{\expandafter\edef\csname #1\endcsname}
\def\@nameedef#1{\expandafter\edef\csname #1\endcsname}
\date{\today}
\title{A cohomological characterization of nilpotent fusion systems}
\author{Antonio D\'{i}az Ramos}
\author{Arturo Espinosa Baro }
\author{Antonio Viruel}
\address{Departamento de {\'A}lgebra, Geometr{\'\i}a y Topolog{\'\i}a,
Universidad de M{\'a}\-la\-ga, Apdo correos 59, 29080 M{\'a}laga,
Spain.}
\email{adiazramos@uma.es}
\address{Faculty of Mathematics and Computer Science, Adam Mickiewicz University, Umultowska 87, 61-614 Poznan, Poland.}
\email{arturo.espinosabaro@gmail.com}
\address{Departamento de {\'A}lgebra, Geometr{\'\i}a y Topolog{\'\i}a,
Universidad de M{\'a}\-la\-ga, Apdo correos 59, 29080 M{\'a}laga,
Spain.}
\email{viruel@uma.es}
\thanks{Authors partially supported by MEC grants MTM2013-41768-P and MTM2016-78647-P and Junta de Andaluc{\'\i}a grant FQM-213. Second author supported by Polish National Science Centre grant 2016/21/P/ST1/03460 within the European Union's Horizon 2020 research and innovation programme under the Marie Skłodowska-Curie grant agreement No. 665778. }
\begin{document}

\begin{abstract}
We provide a nilpotency criterion for fusion systems in terms of the vanishing of its cohomology with twisted coefficients.
\end{abstract}

\maketitle

\section{Introduction}
\label{section:introduction}

Let $G$ be a finite group and let $p$ be a prime. Then $G$ is said to be $p$-nilpotent if a Sylow $p$-subgroup has a complement, i.e., if there exists  a \emph{split} short exact sequence
\[
1\to N\to G\to S
\]
with $S\in \Syl_p(G)$. It turns out that this property can be characterized solely in terms of the \emph{fusion system} of $G$ over $S$. This terminology was introduced in \cite{BLO2} and it is straightforward that 
\[
\text{$G$ is $p$-nilpotent $\Leftrightarrow$ $\CF_S(G)=\CF_S(S)$,}
\]
where $\CF_S(G)$ denotes the the fusion system of $G$ over $S$. Consequently, a fusion system $\CF$ over the $p$-group $S$ is termed nilpotent if $\CF=\CF_S(S)$. Already several authors have provided fusion system counterparts to characterizations of $p$-nilpotency for finite groups, see \cite{BESW}, \cite{BGH}, \cite{CSV}, \cite{D}, \cite{DGPS}, \cite{GRV}, \cite{KLN} and \cite{LZ}. In this work, we prove the fusion system version of a $p$-nilpotency criterion from the late $60$'s due to Wong  \cite{W} and Hoechsmann, Roquette and Zassenhaus \cite{HRZ}.

\begin{thm}\label{thm:main}
Let $\CF$ be a fusion system. Then the following are equivalent:
\begin{enumerate}
\item[(1)] $\CF$ is nilpotent.
\item[(2)] For each $\BF_p[\CF]$-module $M$, if $H^m(\CF^c;M)=0$ for some $m>0$ then  $H^n(\CF^c;M)=0$ for every $n>0$.
\end{enumerate}
\end{thm}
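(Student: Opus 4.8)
The plan is to prove the two implications separately; $(1)\Rightarrow(2)$ is routine and $(2)\Rightarrow(1)$ carries the weight.

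For $(1)\Rightarrow(2)$ the point is that when $\CF=\CF_S(S)$ the cohomology $H^{*}(\CF^{c};M)$ reduces, via the standard identification of the cohomology of the fusion system of a group with the cohomology of that group, to the ordinary group cohomology $H^{*}(S;\overline M)$ of the $p$-group $S$, where $\overline M$ is the underlying $\BF_p[S]$-module. So it is enough to show: for a finite $p$-group $S$, if $H^{m}(S;\overline M)=0$ for some $m>0$ then $\overline M$ is cohomologically trivial, and hence $H^{n}(S;\overline M)=0$ for all $n>0$. I would prove this with Tate cohomology. Dimension shifting turns the hypothesis into $\widehat H^{0}(S;N)=0$ for $N=\Omega^{-m}\overline M$, i.e.\ into $N^{S}$ being the image of the norm; over a $p$-group this forces $N$ to be projective — which one checks on indecomposables, or reads off from $\widehat H^{0}(S;N)\cong\underline{\operatorname{Hom}}_{\BF_p S}(\BF_p,N)$ in the stable category — and then $\overline M\cong\Omega^{m}N$ is projective, so all of its positive cohomology vanishes. (This last step is precisely the $p$-group instance of the Wong--Hoechsmann--Roquette--Zassenhaus criterion, $p$-groups being trivially $p$-nilpotent, and could simply be cited.)

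For $(2)\Rightarrow(1)$ I argue by contraposition: assuming $\CF$ over $S$ is not nilpotent, I will produce an $\BF_p[\CF]$-module $M$ and an $m>0$ with $H^{m}(\CF^{c};M)=0$ but $H^{n}(\CF^{c};M)\neq 0$ for some $n>0$. By Alperin's fusion theorem, non-nilpotency of $\CF$ supplies an $\CF$-centric subgroup $P$ with $\Aut_{\CF}(P)$ not a $p$-group: one may take $P=S$ if $\Aut_{\CF}(S)\supsetneq\mathrm{Inn}(S)$, and otherwise $P$ an essential subgroup. Then $N_{\CF}(P)$ is saturated and constrained, hence equals $\CF_{N_{S}(P)}(L)$ for its model $L$, a finite group; and $L$ is \emph{not} $p$-nilpotent, because $\Aut_{L}(P)=\Aut_{\CF}(P)$ is not a $p$-group. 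The classical Wong--HRZ criterion applied to the group $L$ yields an $\BF_p[L]$-module $M_{0}$ with $H^{m}(L;M_{0})=0$ for some $m>0$ and $H^{n}(L;M_{0})\neq 0$ for some $n>0$ — equivalently a ``gap'' module for $N_{\CF}(P)$, since $H^{*}(N_{\CF}(P)^{c};M_{0})\cong H^{*}(L;M_{0})$.

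The remaining task is to promote $M_{0}$ to an $\BF_p[\CF]$-module retaining the gap, and I expect this to be the main obstacle. I would compare $H^{*}(\CF^{c};-)$ with $H^{*}(N_{\CF}(P)^{c};-)$ through the spectral sequence of the subgroup decomposition of $\CF$, whose $E_{2}$-page is assembled from the higher limits over the orbit category of the functors $Q\mapsto H^{j}(Q;M)$, together with an induction along $N_{\CF}(P)\hookrightarrow\CF$ — realised via the characteristic biset and the linking system — so that $H^{*}(\CF^{c};\mathrm{Ind}\,M_{0})\cong H^{*}(L;M_{0})$ in a cofinal range of degrees, which is all that is needed. Since abstract fusion systems have no naive induction of modules, making this rigorous requires the (bi)set and Mackey-functor machinery for fusion systems, and one must check that the higher limits add no spurious classes that would close the gap. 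A technically cleaner route is a minimal-counterexample argument: if $\CF$ is of minimal order among non-nilpotent fusion systems satisfying $(2)$, Alperin's theorem forces $\CF$ either to equal $\CF_{S}(L)$ for an honest non-$p$-nilpotent group $L$ with $S\in\Syl_{p}(L)$, whence the group case together with $H^{*}(\CF_{S}(L)^{c};-)\cong H^{*}(L;-)$ gives a contradiction, or to be controlled by a single essential subgroup $E$, in which case the minimality is used to glue a bad module produced locally — again via Wong--HRZ for the model of $N_{\CF}(E)$ — into one on all of $\CF$.
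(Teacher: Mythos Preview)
Your $(1)\Rightarrow(2)$ matches the paper's: both reduce to $H^*(S;M)$ and cite the Wong--HRZ criterion for the $p$-group $S$.

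For $(2)\Rightarrow(1)$ there is a genuine gap, exactly where you flag it, and the paper takes a different route that avoids it entirely. The ``promotion'' of $M_0$ is not a technicality. An $\BF_p[\CF]$-module must by definition have trivial $\mathfrak{foc}(\CF)$-action, so such modules are extremely scarce; there is no induction functor from $\BF_p[L]$-modules (for $L$ a local model) to $\BF_p[\CF]$-modules, and the characteristic biset yields transfers on cohomology classes, not on modules. In the extreme case $\mathfrak{hyp}(\CF)=S$ every $\BF_p[\CF]$-module has trivial $S$-action, so your locally constructed $M_0$ typically has no $\CF$-invariant avatar at all. (The paper's introduction makes a cognate remark: there are not enough acyclic $\BF_p[\CF]$-modules to run dimension shifting.) Your minimal-counterexample variant inherits the same defect in its essential-subgroup branch, and even in the ``$\CF=\CF_S(L)$'' branch you would need to exhibit a specific $\CF$-invariant gap module for $L$, not merely invoke Wong--HRZ abstractly.

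The paper instead works globally and topologically. Set $\pi=S/\mathfrak{hyp}(\CF)$ and take $M=\BF_p[\pi]$, which is an $\BF_p[\CF]$-module on the nose. The universal cover of $B\CF$ is $BO^p(\CF)$ with deck group $\pi$, so
\[
H^*(\CF^c;\BF_p[\pi])\;\cong\;H^*(B\CF;\BF_p[\pi])\;\cong\;H^*(BO^p(\CF);\BF_p).
\]
Simple connectivity of $BO^p(\CF)$ gives $H^1=0$; hypothesis~(2) then forces $H^n=0$ for all $n\geq 1$; $p$-completeness of $BO^p(\CF)$ makes it contractible; hence $B\CF\simeq K(\pi,1)$, $\mathfrak{hyp}(\CF)=1$, and $\CF=\CF_S(S)$. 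The idea you are missing is precisely this: one explicit, globally defined module $\BF_p[\pi]$ always exhibits the gap in degree~$1$, so no local analysis, gluing, or induction is needed.
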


Here, a $\BF_p[\CF]$-module is a finitely generated $\BF_p[S]$-module that is $\CF$-invariant, and $H^*(\CF^c;M)$ is twisted cohomology over $\CF^c$, i.e., over the $\CF$-centric subgroups. See Definitions \ref{def:Fmodule} and \ref{def:cohomologyFM} for full details. We give a topological proof of Theorem \ref{thm:main} via the classifying space $B\CF$ of $\CF$ \cite{C}.  Recall that, in the terminology of \cite{BLO2}, $B\CF\simeq |\CL|^\wedge_p$, where $\CL$ is the unique centric linking system associated to $\CF$ and $(\cdot)^\wedge_p$  denotes $p$-completion in the sense of Bousfield and Kan \cite{BK}. 

The original version of Theorem \ref{thm:main} was equivalently stated in terms of Tate's cohomology. Moreover, its proof resorted to dimension shifting. This approach is not suitable here as there are not enough acyclic modules. For instance, if $B\CF$ is simply connected, then, for each $\BF_p[\CF]$-module $M$, $H^*(\CF^c;M)=H^*(\CF;M)$ is cohomology with trivial coefficients. Hence, it will not vanish but in trivial cases.

\textbf{Notation:} Throughout this work, by fusion system we mean a \emph{saturated} fusion system. The unacquainted reader may find an explanation of this terminology and general background on fusion systems in \cite{AKO}.

\section{Proof of the theorem}
\label{section:proofofthetheorem}
We start introducing modules for fusion systems and their cohomology:
\begin{defn}\label{def:Fmodule}
Let $\CF$ be a fusion system over the finite $p$-group $S$. An $\BF_p[\CF]$-module is a finitely generated $\BF_p[S]$-module $M$ which is $\CF$-invariant, i.e., such that:
\[
\forall P\leq S\text{, }\forall \varphi\in \Hom_\CF(P,S)\text{, }\forall p\in P\text{, }\forall m\in M: \varphi(p)\cdot m=p\cdot m.
\]
\end{defn}

It is clear that an $\BF_p[\CF]$-module is the same thing as a finitely generated $\BF_p[S/\mathfrak{foc}(\CF)]$-module, where $\mathfrak{foc}(\CF)$ is the focal subgroup of $\CF$:
\[
\mathfrak{foc}(\CF)=\langle [P,\Aut_\CF(P)]\text{, } P\leq S\rangle.
\]
By inflation, every $\BF_p[\CF]$-module is also a $\BF_p[S/\mathfrak{hyp}(\CF)]$-module, where $\mathfrak{hyp}(\CF)$ is the hyperfocal subgroup of $\CF$:
\[
\mathfrak{hyp}(\CF)=\langle [P,O^p(\Aut_\CF(P))]\text{, } P\leq S\rangle.
\]

\begin{defn}[{\cite[Definition 2.3]{M}}]\label{def:cohomologyFM}
Let $\CF$ be a saturated fusion system over the finite $p$-group $S$ and let $M$ be an $\BF_p[\CF]$-module. For each $n\geq 0$, define the twisted cohmology group $H^n(\CF^c;M)$ as the $\CF^c$-stable elements:
\[
H^n(\CF^c;M)=\{z\in H^n(S;M)|\forall P\in \CF^c\text{, }\forall \varphi\in \Hom_\CF(P,S):res(z)=\varphi^*(z)\}.
\]
\end{defn}
Here, $\varphi^*:H^n(S;M)\to H^n(P;M)$ is the homomorphism induced in cohomology by $\varphi$, and $res=\iota^*$ for the inclusion $\iota\colon P\hookrightarrow S$. By Alperin's Fusion Theorem, if the action of $S/\mathfrak{hyp}(\CF)$ on $M$ is trivial, then $H^n(\CF^c;M)$ coincide with the $\CF$-stable elements 
\[
H^n(\CF;M)=\{z\in H^n(S;M)|\forall P\leq S\text{, }\forall \varphi\in \Hom_\CF(P,S):res(z)=\varphi^*(z)\}.
\]
In general, the abelian group $H^n(\CF^c;M)$
may be recovered via topology as the cohomology of the classifying space of $\CF$ \cite[Corollary 5.4]{M}:
\[
H^*(\CF^c;M)\cong H^*(B\CF;M),
\]
where $\pi_1(B\CF)=S/\mathfrak{hyp}(\CF)$ by \cite[Theorem B]{BCGLO}. Now we are ready to prove the main theorem.
\begin{proof}[Proof of Theorem \ref{thm:main}]
For the implication $(1)\Rightarrow (2)$, note that:
\[
H^n(\CF_S(S)^c;M)=H^n(S;M).
\]
From here it is enough to follow the group theoretical proof, see \cite[Theorem 1]{W} or \cite[Proposition 1a]{HRZ}. For the reverse implication, set $\pi=S/\mathfrak{hyp}(\CF)$ and consider the universal covering space \cite[Theorem 4.4]{BCGLO},
\[
\pi\to BO^p(\CF)\to B\CF,
\]
where $O^p(\CF)\leq \CF$ is the unique $p$-power index fusion subsystem of $\CF$ over $\mathfrak{hyp}(\CF)\leq S$. For $M=\BF_p[\pi]$ with $\pi$ acting by left multiplication, we get
\[
H^1(\CF^c;M)\cong H^1(B\CF;M)\cong H^1(BO^p(\CF);\BF_p)=0,
\]
as $BO^p(\CF)$ is simply connected. Hence, by hypothesis:
\[
0=H^n(\CF^c;M)\cong H^n(BO^p(\CF);\BF_p)
\]
for all $n\geq 1$. As $BO^p(\CF)$ is a $p$-complete space \cite[Proposition 1.11]{BLO2}, it must be contractible \cite[I.5.5]{BK}. Hence, $B\CF\simeq K(\pi,1)$, $\mathfrak{hyp}(\CF)=1$ and $\CF=\CF_S(S)$.
\end{proof}

\begin{remark}
This argument provides an alternative proof of the implication $(2)\Rightarrow (1)$ for finite groups. Namely, consider the following fibration of classifying spaces of finite groups,
\[
BO^p(G)\to BG\to BG/O^p(G),
\]
and then $p$-complete it \cite[II.5.1]{BK}.
\end{remark}

\end{document}

%%%%%%%%%%%%%%%%%%%%%%%%%%%%%%%%%%%%%5
%
%
%
%
%    END DOCUMENT !!!!!!!!!!!!!!
%    END DOCUMENT !!!!!!!!!!!!!!
%    END DOCUMENT !!!!!!!!!!!!!!
%    END DOCUMENT !!!!!!!!!!!!!!
%
%
%
%
%%%%%%%%%%%%%%%%%%%%%%%%%%%%%%%%%%%%%